\documentclass[11pt]{article} 
\usepackage[utf8]{inputenc} 

\usepackage[margin=1in]{geometry} 
\usepackage{graphicx} 

\usepackage{dsfont} 
\usepackage{fvextra}

\usepackage{booktabs} 
\usepackage{array} 
\usepackage{paralist} 
\usepackage{verbatim} 
\usepackage{mathrsfs}
\usepackage{amssymb}
\usepackage{amsthm}
\usepackage{amsmath,amsfonts,amssymb}
\usepackage{esint}
\usepackage{graphics}
\usepackage{enumerate}
\usepackage{mathtools}
\usepackage{xfrac}
\usepackage{subcaption}
\usepackage{stmaryrd}
 \usepackage{mathabx}

\usepackage[dvipsnames]{xcolor}
\usepackage[colorlinks=true, pdfstartview=FitV, linkcolor=blue, citecolor=blue, urlcolor=blue]{hyperref}
\usepackage[normalem]{ulem}

\usepackage{tikz}
\usetikzlibrary{calc}
\usepackage{pgf}
\usetikzlibrary{external}
\numberwithin{equation}{section}
\numberwithin{figure}{section}

\newtheorem{theorem}{Theorem}[section]

\newtheorem{proposition}[theorem]{Proposition}
\newtheorem{lemma}[theorem]{Lemma}
\theoremstyle{definition}

\newtheorem{remark}[theorem]{Remark}

\newcommand*{\supp}{\ensuremath{\mathrm{supp\,}}}

\newcommand*{\N}{\ensuremath{\mathbb{N}}}

\newcommand*{\Z}{\ensuremath{\mathbb{Z}}}

\newcommand*{\R}{\ensuremath{\mathbb{R}}}

\newcommand{\eps}{\varepsilon}

\renewcommand*{\tilde}{\widetilde}

\newcommand{\ep}{\eps}

\DeclareSymbolFont{boldoperators}{OT1}{cmr}{bx}{n}
\SetSymbolFont{boldoperators}{bold}{OT1}{cmr}{bx}{n}
\usepackage{accents}

\newcommand{\T}{\mathbb{T}}

\def\XXint#1#2#3{{\setbox0=\hbox{$#1{#2#3}{\int}$}
\vcenter{\hbox{$#2#3$}}\kern-.5\wd0}}

\let\originalleft\left
\let\originalright\right
\renewcommand{\left}{\mathopen{}\mathclose\bgroup\originalleft}
\renewcommand{\right}{\aftergroup\egroup\originalright}

\newcommand{\C}{\mathbb{C}}

\newcommand{\sol}{\mathcal{T}}
\newcommand{\E}{\mathbb{E}}

\renewcommand{\hat}{\widehat}

\makeatletter
\pgfmathdeclarefunction{erf}{1}{%
  \begingroup
    \pgfmathparse{#1 > 0 ? 1 : -1}%
    \edef\sign{\pgfmathresult}%
    \pgfmathparse{abs(#1)}%
    \edef\x{\pgfmathresult}%
    \pgfmathparse{1/(1+0.3275911*\x)}%
    \edef\t{\pgfmathresult}%
    \pgfmathparse{%
      1 - (((((1.061405429*\t -1.453152027)*\t) + 1.421413741)*\t 
      -0.284496736)*\t + 0.254829592)*\t*exp(-(\x*\x))}%
    \edef\y{\pgfmathresult}%
    \pgfmathparse{(\sign)*\y}%
    \pgfmath@smuggleone\pgfmathresult%
  \endgroup
}
\makeatother

\usepackage{titlesec}

\newcommand{\addperiod}[1]{#1.}
\titleformat{\section}
   {\centering\normalfont\Large}{\thesection.}{0.5em}{}
\titleformat*{\subsection}{\bfseries}
\titleformat{\subsubsection}[runin]
  {\normalfont\bfseries}
  {\thesubsubsection.}
  {0.5em}
  {\addperiod}
\titleformat*{\subsubsection}{\normalfont\itshape}
\titleformat*{\paragraph}{\bfseries}
\titleformat*{\subparagraph}{\large\bfseries}

\title{An AI-discovered smooth random fast dynamo on $\mathbb{T}^3$}

\author{
Keefer Rowan\thanks{Institute of Mathematics, \'Ecole Polytechnique F\'ed\'erale de Lausanne.
{\footnotesize \href{mailto:keefer.rowan@epfl.ch}{keefer.rowan@epfl.ch}.}
}
}
\date{\today}

\usepackage[nottoc,notlot,notlof]{tocbibind}

\begin{document}

\maketitle

\begin{abstract}
    We construct a random, time-dependent divergence-free velocity field on $\T^3$---refreshing iid on finite time blocks and obeying deterministic $C^\infty_{t,x}$ bounds---that exhibits fast dynamo behavior. That is, for every fixed, sufficiently small resistivity, the almost sure exponential growth rate of the magnetic field solving the associated linear resistive induction equation is at least $1/2$; the exceptional null set may depend on the resistivity. We in fact get a time-uniform lower bound---with a random prefactor obeying a uniform-in-$\kappa$ inverse moment bound. The argument relies on a particular algebraic structure in Fourier space of the induction equation solution operator that allows us to propagate expected growth of the logarithmic size of three specially chosen Fourier modes. This allows us to reduce to a simple recursion, avoiding the complicated infinite-dimensional dynamics typical to the dynamo problem. The central proof idea was generated autonomously by ChatGPT 5.6 Sol Ultra; the manuscript was written (and verified) by the author.
\end{abstract}

\section{Introduction}
Let
    \begin{align*}
    u_0(x,y,z) &= \mathrm{e}_z \sin\big(2\pi(x-y)\big)\\
    u_1(x,y,z) &= \mathrm{e}_x \sin\big(2\pi(y-z)\big)\\
    u_2(x,y,z) &= \mathrm{e}_y\sin\big(2\pi(z-x)\big).
\end{align*}

For $j \in \Z$, let $\theta_j$ be iid, uniformly distributed $\T^3$-valued random variables. Let $\eta : \R \to [0,\infty)$ be a smooth bump function: $\supp \eta \subseteq [0,1]$ and $\int \eta(s)\,ds = 1$.
Define the random velocity field
\[U(t,x,y,z) := \sum_{j \in \Z} \eta(t-j) u_{j \mathrm{\,mod\,} 3}(x+\theta_{j,x}, y+\theta_{j,y}, z + \theta_{j,z}).\]

We note that $\nabla \cdot U =0$ and for all $\ell, k \in \N$, there exists $C(k,\ell)>0$ such that we have the deterministic, pathwise bound:
\[\sup_{t \in \R, (x,y,z) \in \T^3} \big|\partial_t^\ell \nabla^k U(t,x,y,z)| \leq C(k,\ell).\]
For $\kappa \geq 0$, we let $B^\kappa : [0,\infty) \times \T^3 \to\R^3$ be the (random) unique solution to
\begin{equation}
    \label{eq:B kappa def}
    \begin{cases}
    \partial_t B^\kappa - \kappa \Delta B^\kappa + U \cdot \nabla B^\kappa - B^\kappa \cdot \nabla U=0\\
    B^\kappa(0,x,y,z) = -2\sqrt{2} \mathrm{e}_y \cos(2\pi x).
    \end{cases}
\end{equation}
Note that $\nabla \cdot B^\kappa_0 = \nabla \cdot B^\kappa_t =0.$ The main result is that $U$ induces fast dynamo growth in the magnetic field $B^\kappa$ solving~\eqref{eq:B kappa def}.

\begin{theorem}
    \label{thm:main}
     For all $\kappa \in \big[0,\frac{1}{4\pi^2} \log\frac{\pi}{e}\big]$ there exists an almost surely finite random constant $L^\kappa \geq 1$ such that for all $t \geq 0$, 
    \[\|B^\kappa_t\|_{L^2_x} \geq \tfrac{1}{L^\kappa} e^{t/2}.\]
    Further, we have the moment estimate that for some $p,C >0$, taken uniformly for $\kappa \in \big[0,\frac{1}{4\pi^2} \log\frac{\pi}{e}\big]$,
    \[\E (L^\kappa)^p \leq C.\]
\end{theorem}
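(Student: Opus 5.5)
The plan is to track three Fourier coefficients that the three shears pass cyclically from one to the next, and to show that the logarithm of their size grows in expectation by a fixed amount per time block. Each $u_i$ is a single-mode shear: its direction is orthogonal to its wave vector ($\mathrm e_z\perp(1,-1,0)$, $\mathrm e_x\perp(0,1,-1)$, $\mathrm e_y\perp(-1,0,1)$). So on a block $[j,j+1]$, where $U$ is a time-modulated translate of one $u_{j \bmod 3}$, the induction equation can be solved explicitly in Fourier space. Write $\hat B(m)$ for the Fourier coefficient at $m\in\Z^3$ and $\tau(t)=\int_j^t\eta$. Under the shear, a mode $m$ is transported into the family $m+n k$, $n\in\Z$, where $k$ is the shear wave vector. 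The coefficients are Bessel-type functions of $\tau$, multiplied by phases $e^{2\pi i n k\cdot\theta_j}$ and damped by $e^{-4\pi^2\kappa\int|m+nk|^2}$. On top of this, the stretching term $(B\cdot\nabla)U$ adds a multiple of the shear direction proportional to $(B\cdot k)$.

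The three distinguished states are
\[
e^{(0)}=\mathrm e_y \text{ at } \pm(1,0,0),\qquad e^{(1)}=\mathrm e_z \text{ at } \pm(0,1,0),\qquad e^{(2)}=\mathrm e_x \text{ at } \pm(0,0,1).
\]
The initial datum is $-2\sqrt2\,e^{(0)}$. By the computation above, during block $j$ the shear $u_{j\bmod 3}$ sends $e^{(j\bmod 3)}$ into $e^{(j+1\bmod 3)}$. For example, $\mathrm e_y\cos(2\pi x)$ stretched by $\mathrm e_z\sin(2\pi(x-y))$ produces $\mathrm e_z$ at $(2,-1,0)$ and at $(0,1,0)$. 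I would let $a_j$ be the (suitably normalized, real-reduced) coefficient of $B^\kappa_j$ on $e^{(j\bmod 3)}$. By Parseval, $\|B^\kappa_j\|_{L^2}\ge c|a_j|$.

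\textbf{Step 1: algebraic structure.} I would write $a_{j+1}=\lambda\, e^{2\pi i\phi_j}a_j+R_j$. Here $\lambda$ is a deterministic complex number coming from the explicit shear solution, $\phi_j$ is a linear combination of the components of $\theta_j$, and $R_j$ collects the contributions of all other modes of $B^\kappa_j$ to the target mode. The point to verify is that $R_j$ depends on $\theta_j$ through phases that are independent of, or at least distinct from, the phase attached to the source term. Then, conditionally on $\mathcal F_j=\sigma(\theta_i:i<j)$, $a_{j+1}$ is a trigonometric polynomial in a uniformly distributed phase.

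\textbf{Step 2: Jensen.} By Jensen's formula (subharmonicity of $\log|\cdot|$) applied in that phase variable,
\[
\E\big[\log|a_{j+1}|\,\big|\,\mathcal F_j\big]\ \ge\ \log|a_j|+\log|\lambda|,
\]
where $|\lambda|$ includes the diffusive damping $e^{-4\pi^2\kappa\cdot(\cdot)}$. I expect that evaluating $\lambda$ gives $\log|\lambda|\ge \log\pi-4\pi^2\kappa\ge 1$ exactly when $\kappa\le\frac1{4\pi^2}\log\frac{\pi}{e}$. This would explain the threshold and leave room to concede the rate $1/2$.

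\textbf{Step 3: from expectation to almost-sure growth.} Let $M_n=\log|a_n|-n\log|\lambda|$, which is a submartingale. The increments are $\log|\lambda e^{2\pi i\phi}a+R|-\log|a|$. These are bounded above, by deterministic $L^2$ bounds over one block together with a comparison of $|R_j|$ to $\|B_j\|$. To control them from below I would instead run the recursion on $\log$ of the vector $(a_j, \|B_j\|)$, or directly on $\log\|B_j\|$ minorized by the target mode. Their negative parts have uniform exponential moments, since $\P(\log|\lambda e^{2\pi i\phi}a+R|<\log|a|-s)\lesssim e^{-s}$ for a uniform phase. A Chernoff/Azuma bound for martingale differences with one-sided exponential tails then gives
\[
\P\big(\log|a_n|<n/2 - c_0 - r\big)\le Ce^{-\delta(n+r)}.
\]
Summing over $n$ yields an almost surely finite $L$ with $\P(\log L>r)\le Ce^{-\delta r}$, hence $\E L^p\le C$ for small $p$, uniformly in $\kappa$ because all constants depend on $\kappa$ only through $|\lambda|$. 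Non-integer times are handled by the deterministic bound $\|B_t\|\ge e^{-C}\|B_{\lfloor t\rfloor}\|$, which follows from the $C^1$ bounds on $U$.

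\textbf{Main obstacle.} I expect Steps 1 and 3 together to be the crux. For Step 1 one has to confirm that the random shift really decouples the source contribution from the remainder $R_j$ as a phase rotation, so that Jensen applies without any a priori control of $R_j$. For Step 3 one has to justify the lower-tail bound when $|a_j|$ may be very small relative to $R_j$, since the Jensen gain is only in expectation. I would first try to handle this by making the recursion genuinely scalar in $\log|a_j|$, using the Jensen lower bound $\max(\log|\lambda a|,\log|R|)\ge\log|\lambda a|$ together with the explicit law of $\log|1+re^{i\psi}|$.
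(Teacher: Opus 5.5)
Your architecture matches the paper's: three cyclically passed modes, a Jensen-formula drift bound, an exponential-martingale argument, and Gr\"onwall between integer times. Your value $|\lambda|=\pi e^{-4\pi^2\kappa}$ is also right. However, the step you defer as ``the point to verify'' is exactly the paper's key idea, and the condition you propose to verify is not the correct one.

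Jensen's formula bounds $\E\log\big|\sum_{n=-N}^{N}c_ne^{2\pi i n\varphi}\big|$ from below only by $\log\max(|c_{-N}|,|c_N|)$. An intermediate coefficient gives nothing: for example, $\int_0^1\log|e^{-2\pi i\varphi}+2+e^{2\pi i\varphi}|\,d\varphi=0<\log 2$. For an infinite Fourier series in $\varphi_j$, which is what generic shear transport with Bessel-type coefficients produces, there is no such bound at all (think of an approximate $\delta$-mass). So ``phases of $R_j$ distinct from the source phase'' is not enough. You need $a_{j+1}$ to be a \emph{finite} trigonometric polynomial in $\varphi_j$ with the source at an \emph{extreme} frequency.

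The paper gets exactly this in Proposition~\ref{prop:one pulse}. On a $u_0$-block the velocity is independent of $z$ and points along $\mathrm{e}_z$. So the $z$-average $\beta$ of $B$ solves a closed equation with no advection, since $U\cdot\nabla\beta=U_z\partial_z\beta=0$. The stretching term $2\pi\eta(\beta_x-\beta_y)\cos\big(2\pi(x-y+\varphi)\big)\mathrm{e}_z$ leaves $\beta_x,\beta_y$ purely diffusing, and it does not iterate because $\partial_z U=0$. Hence $Y_{n+1}=A^{\kappa,-1}Y_ne^{-2\pi i\varphi_n}+\tilde Y_{n,1}+\tilde Y_{n,2}e^{2\pi i\varphi_n}$. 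Only the modes $(1,0,0)$, $(0,1,0)$ and $(-1,2,0)$ contribute, and the source term sits at the lowest frequency.

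Step 3 is also flawed as written.
\begin{itemize}
\item The uncentered increment $\log|a_{j+1}|-\log|a_j|$ is not bounded above. Comparing $|R_j|$ with $\|B_j\|$ only bounds it by $\log(\|B_j\|/|a_j|)+C$.
\item Lower-tail control of martingale differences alone does not give a lower-deviation bound. A positive drift produced by rare, huge upward jumps is compatible with almost sure linear decay.
\item Switching to $\log\|B_j\|$ loses the Jensen drift.
\end{itemize}

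What the paper uses instead is two-sided control of the \emph{centered} increment. This comes for free from the factorization $X-\E X=\sum_{i=1,2}\log\frac{|w-r_i|}{\max(1,|r_i|)}$. It gives $X-\E X\le 2\log 2$ deterministically, and $\E e^{p(\E X-X)}\le C(p)$ only for $p<1/2$ (tail $e^{-s/2}$, not $e^{-s}$, since the two roots may coincide on the unit circle). With this, Lemma~\ref{lem:martingale term subcritical} runs directly on the scalar $\log|Y_n|$. Your worry about $|a_j|\ll|R_j|$ also disappears, because $\E[X_{j+1}\mid\mathcal F_j]\ge X_j+\log|\lambda|$ holds regardless of $R_j$.

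Finally, your Gr\"onwall step goes the wrong way. For $\kappa>0$ the energy identity only gives $\tfrac{d}{dt}\|B\|^2\le 2\|\nabla U\|_\infty\|B\|^2$. So $\|B_t\|\ge e^{-C}\|B_{\lfloor t\rfloor}\|$ does not follow and can fail for high-frequency data. Use $\|B_{\lceil t\rceil}\|\le C\|B_t\|$ instead.
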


\begin{remark}
\label{rem:no fast dynamo}
    We note that the random constant is almost surely finite and obeys moment bounds uniformly in $\kappa \in \big[0,\frac{1}{4\pi^2} \log\frac{\pi}{e}\big]$. However, $L^\kappa$ \textit{depends on $\kappa$}. Hence this proof does not immediately produce a single deterministic field that causes exponential growth for all $\kappa \in \big[0,\frac{1}{4\pi^2} \log\frac{\pi}{e}\big]$, as that would require intersecting uncountably many full probability events. One does immediately get from Fubini's theorem that there is asymptotic exponential growth for almost every $\kappa \in \big[0,\frac{1}{4\pi^2} \log\frac{\pi}{e}\big]$ (of course with a $\kappa$-dependent prefactor). An analogous phenomenon holds in uniform-in-diffusivity exponential mixing~\cite{bedrossian_almost-sure_2021,cooperman_exponentially_2025}.
\end{remark}

\subsection{Dynamos}

Kinematic dynamo theory refers to the study of exponentially growing solutions to the magnetic induction equation
\begin{equation}
\label{eq:induction}
\partial_t b^\kappa - \kappa \Delta b^\kappa + u \cdot \nabla b^\kappa - b^\kappa \cdot \nabla u=0,
\end{equation}
where $u : [0,\infty) \times \T^3 \to \R^3$, $\nabla \cdot u =0$ is an incompressible advecting flow and $\nabla \cdot b^\kappa=0$---which is propagated for free if it holds for $b^\kappa_0$---as required for a magnetic field under Maxwell's equations. The magnetic induction equation models the evolution of a magnetic field in a conducting fluid---neglecting the influence of the magnetic field on the fluid. The growth of the magnetic field under the induction equation is a proposed explanation for the persistent magnetic field of astrophysical bodies, such as the Sun and the Earth, going back to~\cite{larmor_how_1919}.

The dynamo problem refers to building velocity fields for which one can prove exponential growth of the magnetic field. This problem has seen substantial treatment in the applied literature---see~\cite{childress_stretch_1995} for a well-written overview. Of particular interest is the construction of a fast dynamo, for which the exponential growth rate is bounded below by a positive constant, independent of the resistivity $\kappa \in [0,\kappa_0]$ for some $\kappa_0>0$. The rigorous construction of a fast dynamo is stated as one of Arnold's problems~\cite[Problem 1994-28]{arnold_arnolds_2004}. 

To make the problem precise, letting $b^\kappa$ be a solution to~\eqref{eq:induction} with initial data $b_0$, we define the dynamo rate for the velocity field $u$ and the resistivity $\kappa \geq 0$ by
\[\gamma(u,\kappa) := \sup_{b_0 \in L^2(\T^3), \nabla \cdot b_0 =0} \liminf_{t \to \infty} \frac{1}{t} \log \|b^\kappa_t\|_{L^2_x}.\]
Usually, a flow $u$ is said to be a fast dynamo if $\gamma(u,\kappa) \geq \gamma_0 > 0$ for all $\kappa \in [0,\kappa_0]$. As noted in Remark~\ref{rem:no fast dynamo} above, the proof does not almost surely give flows that are deterministically fast dynamos, but rather gives the natural random analog, where for a random field $U$, for all $\kappa \in [0,\kappa_0]$ we have the almost sure inequality (with a $\kappa$-dependent null event), $\gamma(U,\kappa) \geq \gamma_0 > 0$. We believe it is appropriate to call such random flows (random) fast dynamos.

Until recently, rigorous work on dynamos largely did not address the positive problem of constructing dynamos on (physically natural) flat spaces. Dynamos, however, have been constructed on negatively curved spaces~\cite{chicone_geodesic_1997} and a variety of necessary conditions for dynamo action have been introduced~\cite{cowling_magnetic_1933,zeldovich_magnetic_1956,friedlander_dynamo_1991,klapper_rigorous_1995}. In the last few years, there has been major progress on the flat space problem, much of it using sophisticated spectral arguments, starting with \cite{zelati_alpha-unstable_2025}, which builds a fast dynamo on $\R^3$. As physical dynamos have their magnetic field growth visible in a compact region of space, much attention has been focused on the simplest compact case of $\T^3$, which is the setting for the remaining results we discuss. \cite{sorella_limsup_2025} builds a $\limsup$ fast dynamo---where the $\liminf_t$ in the dynamo rate definition is replaced with a $\limsup_t$---and \cite{zelati_fast_2026,niebel_autonomous_2026} build fast dynamos. These constructions are all at the Lipschitz spatial regularity, which is a natural critical regularity for the problem. Nevertheless, one expects dynamo action to also occur for smooth flows. \cite{navarro-fernandez_spectral_2025} builds a smooth \textit{slow} dynamo, where $\gamma(u,\kappa)>0$ for $\kappa \in (0,\kappa_0)$ but $\gamma(u,\kappa) \to 0$ as $\kappa \to 0$ (technically they work on a variety of manifolds which are not $\T^3$, though their results cover the compact manifold $\{|(x,y)| \leq 1\} \times \T$). \cite{rowan_subsequentially_2025} builds a smooth \textit{subsequentially} (and $\limsup$ in the sense of \cite{sorella_limsup_2025}) fast dynamo, where $\gamma(u,\kappa)$ (defined with a $\limsup_t$) stays bounded away from $0$ only on a sequence of resistivities $\kappa_j \to 0.$ There is also the theory of ideal dynamos (the $\kappa=0$ case)~\cite{navarro-fernandez_ideal_2026}, for which smooth examples are known~\cite{coti_zelati_three-dimensional_2026}, though the phenomenology there is fairly different. The result of this work provides the first smooth random fast dynamo.

All of these constructions are fairly bespoke, being built specifically in order to show the desired dynamo behavior. One could hope that a general theory is attainable for \textit{random flows}, where suitable random velocity fields with an appropriate non-degeneracy condition are fast dynamos---analogous to the striking results in the problem of uniform-in-diffusivity exponential mixing~\cite{bedrossian_almost-sure_2021,cooperman_exponentially_2025}, which shares many similarities to the dynamo problem. This is very physically appealing in providing a generic origin for dynamo growth. This work does not provide such a theory, though perhaps goes somewhat in the desired direction.

The reason one may expect randomness to help is that the noise can prevent cancellations that cause a possible route to exponential growth to be perfectly erased by an alternative mechanism. Showing that randomness can perform this role in a general setting is a quite difficult (and very interesting) open problem. However, this result gives a (very) simple example of such a phenomenon based on a special algebraic structure.

\subsection{Discussion of the proof}

For any fixed $\kappa$ and deterministic velocity field $u: \T^3 \to \R^3$, let $\mathcal{T}$ denote the unit time solution operator to~\eqref{eq:induction} and let $\mathcal{T}(\theta)$ denote the unit time solution operator to~\eqref{eq:induction} with velocity field $u(\cdot + \theta)$, where $\theta \in \T^3$. A straightforward computation then shows that
\[\hat \sol(\theta; k,j) :=\int e^{-2\pi i k \cdot r} \mathcal{T}(\theta) e^{2\pi i j \cdot r}\,dr =  e^{2\pi i (k-j) \cdot \theta} \int e^{-2\pi i k \cdot r} \mathcal{T} e^{2\pi i j \cdot r}\,dr =: e^{2\pi i (k-j) \cdot \theta} \hat \sol(k,j). \]
Thus for arbitrary fields $u$, random translations generate a particular Fourier structure in the translation parameter, which can be easily exploited (for suitable choice of $u$) to get both exponential growth in expectation and exponential growth along \textit{some} noise trajectories (though perhaps a measure zero set of them). This is the central idea of~\cite{rowan_subsequentially_2025}.

However, generally, even if we know \textit{some} Fourier coefficients are large---hence we know that we get growth for \textit{some} $\theta$---it is highly nonobvious---and potentially generally false---that we should get growth for \textit{typical} $\theta$. E.g.\ an approximate $\delta$ mass can have large Fourier coefficients but, with arbitrarily high probability, be quite small.

The central innovation of this manuscript---discovered by ChatGPT 5.6 Sol Ultra, see further discussion of AI use in Section~\ref{s:AI} below---is to use that for well-chosen velocity fields, we can make $\hat \sol(k,j)$ ``tridiagonal'' on certain rows; see Proposition~\ref{prop:one pulse}. We can then use a special fact---Lemma~\ref{lem:Jensen applied} coming from Jensen's formula\footnote{A complex analysis fact; not the Jensen inequality.}---that lower bounds the average $\log$ magnitude of finite polynomials in $e^{2\pi i \varphi}$ by the logarithm of their extremal coefficients:
\[\int_0^1 \log |ae^{-2\pi i \varphi} + b + ce^{2\pi i \varphi}|\,d\varphi \geq \log\big(\max(|a|,|c|)\big).\]
By extremal coefficients, we mean the coefficients on the lowest and highest degree terms---notably the intermediate-degree terms do not work without incurring a degree-dependent loss, hence requiring the special tridiagonal structure as opposed to the more typical case of an infinite-order Fourier series in $e^{2\pi i \varphi}$.

These two facts together allow us to iteratively propagate linear growth in the expected logarithmic size of certain Fourier coefficients. A further martingale argument---again exploiting the special structure of the polynomials in $e^{2\pi i \varphi}$---then enforces that the fluctuations around the expected growth are asymptotically non-dominant, leading to the all-time lower bound statement of Theorem~\ref{thm:main}.

\section{Proof of the main result}

Throughout, for a function $f : \T^3 \to \C$, we denote the Fourier coefficient $\hat f(k)$ for $k \in \Z^3$:
\[\hat f(k) := \int e^{-2\pi i k \cdot r} f(r)\,dr.\]

\begin{proposition}
\label{prop:one pulse}
    Let $\varphi$ be a uniform $\T^1$-valued random variable and let $v : [0,1] \times \T^3 \to \R^3$ be the random velocity field given by
    \[v(t,x,y,z) = \eta(t) u_0(x+\varphi,y,z) = \mathrm{e}_z\eta(t) \sin\big(2\pi(x-y + \varphi)\big).\]
    For $\kappa \geq 0$, let $b^\kappa : [0,1] \times \T^3 \to \R^3$ be the (random) unique solution to
    \begin{equation}
    \partial_t b^\kappa - \kappa \Delta b^\kappa + v\cdot \nabla b^\kappa - b^\kappa \cdot \nabla v=0,
\end{equation}
    with deterministic initial data $b^\kappa_0 = b_0 \in L^2(\T^3)$. Then
    \begin{align*}\tfrac{1}{\sqrt{2}}(\mathrm{e}_y - \mathrm{e}_z) \cdot \hat b_1^\kappa(0,1,0) &= A^{\kappa,-1} \tfrac{1}{\sqrt{2}}(\mathrm{e}_x - \mathrm{e}_y) \cdot\hat b_{0}(1,0,0) e^{-2\pi i \varphi}+ A^{\kappa,0} \tfrac{1}{\sqrt{2}} (\mathrm{e}_y - \mathrm{e}_z) \cdot \hat b_{0}(0,1,0)
    \\&\qquad + A^{\kappa,1}\tfrac{1}{\sqrt{2}}(\mathrm{e}_x - \mathrm{e}_y) \cdot \hat b_{0}(-1,2,0)e^{2\pi i \varphi},
    \end{align*}
    where
       \begin{align*}
        A^{\kappa,1} &:= -\pi \int_0^1 \eta(t) e^{-4\pi^2 \kappa(1-t)} e^{-20 \pi^2 \kappa t}\,dt,\\
        A^{\kappa,0} &:= e^{-4\pi^2 \kappa},\\
        A^{\kappa,-1} &:= -\pi e^{-4\pi^2 \kappa}.
    \end{align*}
\end{proposition}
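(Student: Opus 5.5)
The plan is to exploit the fact that $v$ is a shear flow in the $\mathrm{e}_z$ direction whose profile depends only on $x-y$. We then compute the evolution of the relevant Fourier coefficients exactly via Duhamel's formula. Write $f(t,r) := \eta(t)\sin\big(2\pi(x-y+\varphi)\big)$, so that $v = \mathrm{e}_z f$. Then $v\cdot\nabla b = f\,\partial_z b$ and
\[
b\cdot\nabla v = \mathrm{e}_z\,(b_x-b_y)\,2\pi\eta(t)\cos\big(2\pi(x-y+\varphi)\big).
\]
Writing the cosine as $\tfrac12\big(e^{2\pi i((1,-1,0)\cdot r+\varphi)}+e^{-2\pi i((1,-1,0)\cdot r+\varphi)}\big)$, both terms shift a Fourier mode $k$ by $\pm(1,-1,0)$. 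This shift preserves $k_z$.

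The first key step is the observation that the advection term acting on a mode $k$ carries a factor $2\pi i k_z$, so it vanishes identically on the slice $\{k_z=0\}$. Moreover, neither term maps modes with $k_z\neq0$ into this slice. Hence the Fourier coefficients of $b^\kappa$ on $\{k_z=0\}$ solve a closed finite-rank-in-structure ODE system, which I would justify directly from the Fourier-transformed equation using uniqueness of the solution. On this slice:
\begin{itemize}
\item the $x$- and $y$-components evolve by pure heat flow, $\hat b_{x,y}(t,k)=e^{-4\pi^2\kappa|k|^2t}\hat b_{0;x,y}(k)$;
\item the $z$-component evolves by heat flow plus the forcing
\[
+\pi\eta(t)\Big(e^{2\pi i\varphi}(\hat b_x-\hat b_y)(t,k-(1,-1,0)) + e^{-2\pi i\varphi}(\hat b_x-\hat b_y)(t,k+(1,-1,0))\Big).
\]
\end{itemize}
The sign is $+$ because the stretching term enters the equation as $+\,b\cdot\nabla v$ on the right-hand side.

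Next, I specialize to $k=(0,1,0)$. The two source modes are $(-1,2,0)$, with $|k|^2=5$ and phase $e^{2\pi i\varphi}$, and $(1,0,0)$, with $|k|^2=1$ and phase $e^{-2\pi i\varphi}$. Duhamel's formula gives
\[
\hat b_z(1,(0,1,0)) = e^{-4\pi^2\kappa}\hat b_{0,z}(0,1,0) + \pi\int_0^1 \eta(t)e^{-4\pi^2\kappa(1-t)}\Big(e^{-20\pi^2\kappa t}e^{2\pi i\varphi}\,\delta(-1,2,0) + e^{-4\pi^2\kappa t}e^{-2\pi i\varphi}\,\delta(1,0,0)\Big)\,dt,
\]
where $\delta(k):=(\hat b_{0,x}-\hat b_{0,y})(k)$. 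In the second term the exponentials combine to $e^{-4\pi^2\kappa}$, and $\int\eta=1$.

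Finally, I project onto $\tfrac1{\sqrt2}(\mathrm{e}_y-\mathrm{e}_z)$. The $y$-part contributes $e^{-4\pi^2\kappa}\hat b_{0,y}(0,1,0)/\sqrt2$, which combines with the homogeneous $z$-part to give the $A^{\kappa,0}$ term. The forcing terms enter with a minus sign. Since $\delta(k)/\sqrt2=\tfrac1{\sqrt2}(\mathrm{e}_x-\mathrm{e}_y)\cdot\hat b_0(k)$, this produces exactly $A^{\kappa,\pm1}$.

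The main obstacle is essentially bookkeeping: verifying that the $k_z=0$ slice is genuinely invariant, so that no contributions leak in, and tracking the signs and phases of the two shifts correctly. I would double-check the phases by testing on a single plane wave.
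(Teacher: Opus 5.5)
Your proposal is correct and follows essentially the same route as the paper. Restricting to the $k_z=0$ slice is exactly the paper's passage to the $z$-averaged component $\beta^0$: advection vanishes there, the $x,y$ components undergo pure heat flow, and Duhamel for the $z$-component, with the modes $(-1,2,0)$ and $(1,0,0)$ and the phases and decay factors you computed, followed by projection onto $\tfrac{1}{\sqrt2}(\mathrm{e}_y-\mathrm{e}_z)$, yields exactly the stated $A^{\kappa,j}$.
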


\begin{proof}
    Write $b_t^\kappa = \sum_{j \in \Z} e^{2\pi i j z} \beta_t^j(x,y)$ with $\beta^j_t : \T^2 \to \C^3$. We note that, as $v$ is independent of the $z$ coordinate, the $\beta^j$ solve closed equations. We are interested only in $\beta := \beta^0$, for which the advective term trivializes, as there is no $z$ dependence and $v$ points purely in the $z$ direction. That is $\beta$ solves
    \[\partial_t \beta = \kappa \Delta \beta +\beta \cdot \nabla v = \kappa \Delta \beta + 2\pi \eta(t) \mathrm{e}_z \Big( (\beta_x - \beta_y) \cos\big(2\pi (x-y + \varphi)\big)\Big).\]
    Thus the vector stretching term only affects the $z$ coordinate of $\beta$, so
    \[\beta_{t,x} = e^{\kappa t \Delta} \beta_{0,x} \quad \text{and} \quad \beta_{t,y} = e^{\kappa t \Delta} \beta_{0,y}.\]
    Then
    \[\partial_t \beta_{t,z} =  \kappa \Delta \beta_{t,z} + 2\pi \eta(t) \Big( e^{\kappa t \Delta} (\beta_{0,x}- \beta_{0,y}) \cos\big(2\pi (x-y + \varphi)\big)\Big).\]
    Thus
    \begin{align*}&\partial_t \hat \beta_{t,z}(0,1) = - 4 \pi^2 \kappa  \hat \beta_{t,z}(0,1) 
    \\&\qquad+ \pi \eta(t) \Big(e^{-20 \pi^2 \kappa t} (\hat\beta_{0,x}(-1,2)- \hat\beta_{0,y}(-1,2))e^{2\pi i \varphi}  +  e^{-4 \pi^2 \kappa t} (\hat\beta_{0,x}(1,0)- \hat\beta_{0,y}(1,0))e^{-2\pi i \varphi}\Big),
    \end{align*}
    where we compute
\begin{align*}&\int e^{-2\pi i y}e^{\kappa t \Delta} (\beta_{0,x}- \beta_{0,y}) \big(e^{2\pi i (x-y + \varphi)} + e^{-2\pi i(x-y + \varphi)}\big) \,dxdy
\\&\qquad= \int (\beta_{0,x}- \beta_{0,y}) e^{\kappa t \Delta} \big(e^{2\pi i (x-2y + \varphi)} + e^{-2\pi i(x + \varphi)}\big) \,dxdy
\\&\qquad =e^{-20 \pi^2 \kappa t} (\hat\beta_{0,x}(-1,2)- \hat\beta_{0,y}(-1,2))e^{2\pi i \varphi}  +  e^{-4 \pi^2 \kappa t} (\hat\beta_{0,x}(1,0)- \hat\beta_{0,y}(1,0))e^{-2\pi i \varphi} .
\end{align*}
    Thus
    \begin{align*}\hat \beta_{1,z}(0,1) &= e^{-4 \pi^2 \kappa} \hat \beta_{0,z}(0,1) + \pi \Big(\int_0^1 \eta(t) e^{-4\pi^2 \kappa(1-t)} e^{-20 \pi^2 \kappa t}\,dt\Big) (\hat\beta_{0,x}(-1,2)- \hat\beta_{0,y}(-1,2))e^{2\pi i \varphi} 
    \\&\qquad + \pi \Big(\int_0^1 \eta(t) e^{-4\pi^2 \kappa(1-t)} e^{-4 \pi^2 \kappa t}\,dt\Big)(\hat\beta_{0,x}(1,0)- \hat\beta_{0,y}(1,0))e^{-2\pi i \varphi}.
    \end{align*}
    Thus, we finally have that
    \begin{align*}
        (\mathrm{e}_y - \mathrm{e}_z) \cdot\hat b_1^\kappa(0,1,0) &= \hat \beta_{1,y} - \hat \beta_{1,z}
        \\&= A^{\kappa,0}(\hat \beta_{0,y}(0,1) - \hat \beta_{0,z}(0,1)) + A^{\kappa,1} (\hat\beta_{0,x}(-1,2)- \hat\beta_{0,y}(-1,2))e^{2\pi i \varphi} 
    \\&\qquad + A^{\kappa,-1}(\hat\beta_{0,x}(1,0)- \hat\beta_{0,y}(1,0))e^{-2\pi i \varphi},
    \end{align*}
    with $A^{\kappa,j}$ as in the proposition statement. Recalling that $\hat \beta_t(j,\ell) = \hat b_t^\kappa(j,\ell,0)$ and dividing by $\sqrt{2}$, we conclude. 
\end{proof}

We will use the following fact, which is just Jensen's formula~\cite[Page 208]{ahlfors_complex_1978} specialized to complex linear factors.
\begin{lemma}
\label{lem:Jensen}
    For all $r \in \C$,
    \[\int_0^1 \log \big|e^{2\pi i \varphi} - r\big|\,d\varphi = \log\big(\max(1,|r|)\big).\]
\end{lemma}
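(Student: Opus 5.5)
The plan is to prove the identity by splitting into the cases $|r|\le 1$ and $|r|>1$, reducing each case to the mean value property for a harmonic function, and treating the boundary case $|r|=1$ (where the integrand has a logarithmic singularity) by a limiting argument.

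\textbf{Case $|r|<1$.} The function $w \mapsto \log|w - r|$ is harmonic on a neighborhood of the closed unit disk, since its only singularity is at $w=r$, which lies strictly inside. That is the wrong way around for a direct application, so I first rewrite the integrand. On the circle $|w|=1$ we have $|w-r| = |1-\bar r w|$, because $|w-r| = |\bar w|\,|w - r| = |1 - r\bar w|$, which is the conjugate of $1-\bar r w$. The function $w\mapsto \log|1-\bar r w|$ is harmonic on a neighborhood of the closed disk, being the real part of a branch of $\log(1-\bar r w)$ with $|\bar r w|<1$. The mean value property then gives
\[\int_0^1 \log|e^{2\pi i\varphi}-r|\,d\varphi = \log|1-0| = 0 = \log\max(1,|r|).\]

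\textbf{Case $|r|>1$.} Factor the integrand as
\[\log|e^{2\pi i\varphi}-r| = \log|r| + \log|1 - e^{2\pi i\varphi}/r|.\]
The function $w\mapsto\log|1-w/r|$ is harmonic on a neighborhood of the closed unit disk because $|w/r|<1$ there. Its mean over the circle is therefore its value at $0$, which is $0$. The integral equals $\log|r|=\log\max(1,|r|)$, as claimed.

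\textbf{Case $|r|=1$.} This is the only delicate point, since the integrand has an integrable singularity of order $\log|\varphi-\varphi_0|$. By rotation invariance of $d\varphi$ we may take $r=1$. I would show directly that $\int_0^1\log|1-e^{2\pi i\varphi}|\,d\varphi=0$. One route is dominated convergence applied to $r_n = 1-1/n \to 1$, noting that for $|r|\le 1$ we have $|w-r|\ge$ a constant times $|w-r/|r||$ on the circle, which gives an integrable majorant. The cleaner route is the classical computation $\int_0^1\log|2\sin\pi\varphi|\,d\varphi=0$, which follows from the identity $\int_0^{\pi}\log\sin x\,dx = -\pi\log 2$. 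Either approach gives $0=\log\max(1,1)$.

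The hardest step is justifying the limit, or equivalently computing the integral, in the boundary case $|r|=1$. Since only the inequality direction is needed downstream, even Fatou-type semicontinuity would suffice, but the equality as stated needs the explicit integral.
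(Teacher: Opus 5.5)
Your proof is correct. The paper gives no argument of its own: it cites Jensen's formula applied to the linear function $f(w)=w-r$. What you have written is the standard proof of that formula in the one-factor case, made self-contained.
\begin{itemize}
\item For $|r|>1$ there is no zero in the disk, and the mean value property for $\log|1-w/r|$ leaves $\log|r|$.
\item For $|r|<1$, the reflection $|w-r|=|1-\bar r w|$ on $|w|=1$ is exactly the Blaschke-factor trick that removes the interior zero.
\item For $|r|=1$ one needs $\int_0^1\log|1-e^{2\pi i\varphi}|\,d\varphi=0$. This is the same extra step the standard proof of Jensen's formula uses to allow zeros on the circle.
\end{itemize}
The citation buys brevity. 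Your version shows explicitly that the only delicate point is the boundary case.

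There are two small slips in the write-up.
\begin{itemize}
\item The first sentence of your $|r|<1$ case is backwards. There $\log|w-r|$ is harmonic near the unit circle, but not on a neighborhood of the closed disk, precisely because $r$ lies inside. Your next sentence shows you meant the opposite.
\item Your closing sentence contradicts your own $|r|=1$ paragraph. The dominated convergence route already gives equality, not just an inequality. The majorant comes from the identity $|w-r|^2=(1-|r|)^2+|r|\,\bigl|w-r/|r|\bigr|^2$ on $|w|=1$ together with the trivial bound $|w-r|\le 2$.
\end{itemize}

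Your side remark about what is needed downstream is slightly imprecise. The fluctuation bound in Lemma~\ref{lem:Jensen applied} uses the exact value of $\E X$ as written: the upper bound on $\E X$ enters through $\E e^{p(\E X-X)}$. A Fatou argument alone only gives the lower bound $\int_0^1\log|1-e^{2\pi i\varphi}|\,d\varphi\ge 0$. It would still suffice when combined with the trivial estimate $|w-r|\le 2\max(1,|r|)$, at the cost of changing constants.
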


\begin{lemma}
\label{lem:Jensen applied}
    Let $\varphi$ be a uniform $\T^1$-valued random variable and $a,b,c \in \C$ and $|a| + |b| + |c| >0$. Let $X$ be the $\R$-valued random variable given by $X := \log \big|a e^{- 2\pi i \varphi} + b + c e^{2\pi i \varphi}\big|$. Then we have that
    \begin{equation}\label{eq:mean growth}
    \E X \geq \log\big(\max(|a|,|c|)\big).
    \end{equation}
    Further, for all $p \in (0,1/2)$, there exists $C(p)>0$ (not depending on $a,b,c$) such that 
   \begin{equation} \E e^{p |X - \E X|} \leq C(p). 
    \label{eq:fluctuation bound}
    \end{equation}
\end{lemma}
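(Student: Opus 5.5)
Write $w = e^{2\pi i\varphi}$, so $|aw^{-1} + b + cw| = |cw^2 + bw + a|$ since $|w|=1$. The plan is to factor this quadratic and apply Lemma~\ref{lem:Jensen} to each linear factor.

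\textbf{Mean bound.} If $c \neq 0$, factor $cw^2+bw+a = c(w-r_1)(w-r_2)$, so that
\[
X = \log|c| + \log|w-r_1| + \log|w-r_2|.
\]
By Lemma~\ref{lem:Jensen},
\[
\E X = \log|c| + \log\max(1,|r_1|) + \log\max(1,|r_2|) \ge \log|c|.
\]
Moreover $|r_1r_2| = |a|/|c|$, so $\max(1,|r_1|)\max(1,|r_2|) \ge |r_1 r_2|$ and hence $\E X \ge \log|a|$. Together these give $\E X \ge \log\max(|a|,|c|)$. The degenerate cases are handled directly. If $c=0$ and $b\neq 0$, use the single root $-a/b$: $\E X = \log|b| + \log\max(1,|a/b|) \ge \log|a|$. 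If $c=0$ and $b=0$, then $X=\log|a|$ exactly. (Alternatively, these follow by continuity, or by applying the same argument to the reversed polynomial $a w^{-2}+\dots$.)

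\textbf{Fluctuation bound.} In the factored form, $X - \E X = \sum_i \big(\log|w-r_i| - \log\max(1,|r_i|)\big)$, a sum of at most two terms. So it suffices to bound, uniformly in $r\in\C$, the one-root quantity $Y_r := \log|w-r| - \log\max(1,|r|)$. Then Cauchy–Schwarz gives
\[
\E e^{p|X-\E X|} \le \prod_i \big(\E e^{2p|Y_{r_i}|}\big)^{1/2},
\]
which requires $2p<1$; this is where $p<1/2$ enters. The required uniform bound is $\E e^{q|Y_r|}\le C(q)$ for $q<1$. For the upper tail, $|w-r| \le 1+|r| \le 2\max(1,|r|)$, so $Y_r \le \log 2$. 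For the lower tail, $e^{-qY_r} = \big(\max(1,|r|)/|w-r|\big)^q$.
\begin{itemize}
\item If $|r|\ge 2$, then $|w-r|\ge |r|/2$, so this is at most $2^q$.
\item If $|r|\le 2$, it is at most $2^q|w-r|^{-q}$. We have $|w-r| \ge |w - r/|r||$ (for $r\neq0$), and on the circle $|e^{2\pi i\varphi}-e^{2\pi i\alpha}| \ge 4\,\mathrm{dist}(\varphi,\alpha)$. Hence $\E|w-r|^{-q} \le \int_0^1 (4\,\mathrm{dist}(\varphi,0))^{-q}\,d\varphi < \infty$ for $q<1$.
\end{itemize}
So $\E e^{q|Y_r|} \le e^{q\log 2} + C\,\E e^{-qY_r} \le C(q)$, uniformly in $r$.

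\textbf{Main obstacle.} The only real care needed is making the constants uniform over $a,b,c$, including degenerate configurations: coinciding roots, and roots near or on the unit circle. The factorization handles all of these, since each root contributes separately and the one-root bound is uniform in $r$. The degree drops only when $c=0$. In that case there is at most one factor and the same one-root bound applies without needing Cauchy–Schwarz; when $b=c=0$, $X$ is constant and the bound is trivial.
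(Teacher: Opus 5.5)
Your argument has the same skeleton as the paper's proof. You factor the quadratic, apply Lemma~\ref{lem:Jensen} to each root, and use Cauchy--Schwarz to reduce \eqref{eq:fluctuation bound} to a one-root estimate with exponent $2p<1$. You then control the upper tail by $|w-r|\le 2\max(1,|r|)$ and the lower tail by integrability of $s^{-q}$ for $q<1$. Keeping the leading coefficient and using $|r_1r_2|=|a|/|c|$ gives $\E X\ge\log|c|$ and $\E X\ge\log|a|$ simultaneously. This is a tidy substitute for the paper's reduction to $|a|\ge|c|$, $a=1$. Splitting at $|r|=2$ replaces the paper's inversion to $\sup_{|d|\le1}\int_0^1|de^{2\pi i\varphi}-1|^{-2p}\,d\varphi$. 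Both variations are fine.

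There is, however, a false step in the case $|r|\le 2$: the inequality $|w-r|\ge\big|w-r/|r|\big|$ fails for $0<|r|<1$. For instance, $r=1/2$ and $w=-1$ give $|w-r|=3/2<2=|w-1|$. The inequality does hold for $|r|\ge1$, where $r\mapsto r/|r|$ is the nearest-point projection onto the closed unit disk. But a root inside the disk, pushed outward, can move away from points on the far side of the circle.

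The repair is one line: $\big|w-r/|r|\big|\le|w-r|+\big|1-|r|\big|\le 2|w-r|$, because $\big|1-|r|\big|=\big||w|-|r|\big|\le|w-r|$. Combined with your chord bound, this gives $|w-r|\ge 2\,\mathrm{dist}(\varphi,\alpha)$, where $e^{2\pi i\alpha}=r/|r|$. Hence $\E|w-r|^{-q}\le\int_0^1(2\,\mathrm{dist}(\varphi,0))^{-q}\,d\varphi=\frac{1}{1-q}$ uniformly in $r$, and the rest of your proof goes through unchanged.

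This uniform lower bound is exactly what the paper's claim $\sup_{|d|\le1}\int_0^1|de^{2\pi i\varphi}-1|^{-2p}\,d\varphi\le C(p)$ tacitly uses. With the fix, your write-up is, if anything, more complete on this point.
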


\begin{proof}
    Sending $\varphi \mapsto -\varphi$ if necessary, we can suppose without loss of generality that $|a| \geq |c|$. If $|a| =0$, then the result holds trivially. Otherwise, letting $w \in \C$ be given by $w = e^{-2\pi i \varphi}$, we note that 
    \[X = \log \big|a w^2 + bw + c| = \log |a| + \log |w^2 + a^{-1} b w + a^{-1} c|.\]
    Thus the result with $a=1$ and $|c| \leq 1$ implies the general result; we thus take this hypothesis.

    Then we can factor the polynomial, giving for some $r_1, r_2 \in \C$,
    \[X = \log |w^2 +  b w + c| = \log |w- r_1| + \log |w-r_2|.\]
    Then by Lemma~\ref{lem:Jensen},
    \[\E X = \log \big(\max(1,|r_1|)\big) +\log \big(\max(1,|r_2|)\big).\]
    Thus $\E X \geq 0$ allows us to conclude~\eqref{eq:mean growth}. For~\eqref{eq:fluctuation bound}, we note that
    \[X - \E X = \log \frac{|w-r_1|}{\max(1,|r_1|)} + \log \frac{|w-r_2|}{\max(1,|r_2|)}.\]
    Thus $X - \E X\leq 2 \log 2$. Then for any $p >0$,
    \[\E e^{p|X - \E X|} \leq \E e^{p(X - \E X)} + \E e^{p(\E X - X)} \leq 4^p+ \E e^{p(\E X - X)}.\]
    We also have that
    \begin{align*}
        \E e^{p(\E X - X)} &=\int_0^1 \frac{\max(1,|r_1|)^p  \max(1,|r_2|)^p}{|e^{2\pi i \varphi} - r_1|^p |e^{2\pi i \varphi} - r_2|^p}\,d\varphi
        \\&\leq \Big(\int_0^1 \frac{\max(1,|r_1|)^{2p}}{|e^{2\pi i \varphi} - r_1|^{2p}}\,d\varphi\Big)^{1/2}\Big(\int_0^1 \frac{\max(1,|r_2|)^{2p}}{|e^{2\pi i \varphi} - r_2|^{2p}}\,d\varphi\Big)^{1/2}.
    \end{align*}
    Then if $|r_1| \geq 1$, we have that 
   \[ \int_0^1 \frac{\max(1,|r_1|)^{2p}}{|e^{2\pi i \varphi} - r_1|^{2p}}\,d\varphi = \int_0^1 |r_1^{-1} e^{2\pi i \varphi} - 1|^{-2p}\,d\varphi,\]
   while if $|r_1|\leq 1$, we have that
   \[\int_0^1 \frac{\max(1,|r_1|)^{2p}}{|e^{2\pi i \varphi} - r_1|^{2p}}\,d\varphi =\int_0^1 | e^{2\pi i \varphi} - r_1|^{-2p}\,d\varphi= \int_0^1 |1-r_1 e^{-2\pi i \varphi}|^{-2p}\,d\varphi =  \int_0^1 |r_1 e^{2\pi i \varphi}-1|^{-2p}\,d\varphi.\]
   Thus 
   \[\E e^{p(\E X - X)} \leq \sup_{|d| \leq 1} \int_0^1 |d e^{2\pi i\varphi} -1|^{-2p}\,d\varphi \leq C(p),\]
    where we have a finite constant $C(p)>0$ for any $p \in (0,1/2)$, using that $x^{-q}$ is integrable on $[0,1]$ whenever $q <1$.

    Combining the above estimates, we conclude~\eqref{eq:fluctuation bound}.
\end{proof}

\begin{lemma}
\label{lem:martingale term subcritical}
    Let $(X_j)_{j \in \N}$ be a sequence of integrable $\R$-valued random variables adapted to a filtration $(\mathcal{F}_j)_{j \in \N}$. Suppose that for some $h>0$, for all $n \in \N$, 
    \[\E[X_{n+1} \mid \mathcal{F}_n] \geq h + X_n,\]
    and that for some $p>0$, there exists $K>0$ such that, almost surely,
    \[\E\bigg[ \exp\Big(p \big| X_{n+1} - \E[X_{n+1} \mid \mathcal{F}_n] \big|\Big)\, \bigg|\, \mathcal{F}_n\bigg] \leq K.\]
    Then there exists an almost surely finite random variable $R \geq 0$ such that for all $n \in \N$, we have
    \[X_n \geq X_0+ \frac{h}{2} n - R\]
    and $R$ obeys the exponential moment estimate for some $q(p,K,h) >0$ and $C(p,K,h)>0$,
    \[\E e^{qR} \leq C.\]
\end{lemma}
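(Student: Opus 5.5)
Write $D_{n+1} := X_{n+1} - \E[X_{n+1}\mid\mathcal{F}_n]$, a martingale difference sequence, and $M_n := \sum_{j=1}^{n} D_j$. Telescoping with the drift hypothesis gives
\[X_n - X_0 = \sum_{j=0}^{n-1}\big(\E[X_{j+1}\mid\mathcal{F}_j] - X_j\big) + M_n \geq hn + M_n.\]
So it suffices to set
\[R := \sup_{n\in\N}\big(-M_n - \tfrac{h}{2}n\big)_+,\]
which gives $X_n \geq X_0 + \tfrac h2 n - R$ for every $n$. What remains is to show that $R$ has an exponential moment.

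\textbf{Conditional mgf bound.} From $\E[e^{p|D_{j+1}|}\mid\mathcal{F}_j]\le K$ we get a uniform conditional bound: for $\lambda \in [0,p/2]$,
\[\E[e^{-\lambda D_{j+1}}\mid\mathcal{F}_j] \leq 1 + C(p,K)\lambda^2.\]
To prove it, expand $e^{y} = 1 + y + y^2\int_0^1(1-s)e^{sy}\,ds$, so that $e^y \le 1 + y + \tfrac{y^2}{2} e^{|y|}$. Apply this with $y = -\lambda D_{j+1}$ and use $\E[D_{j+1}\mid\mathcal{F}_j]=0$. The remaining term is bounded by $\tfrac{\lambda^2}{2}\,\E\big[D_{j+1}^2 e^{(p/2)|D_{j+1}|}\mid\mathcal{F}_j\big]$. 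Since $y^2 \le C_p e^{(p/2)|y|}$, this is at most $C_p' K \lambda^2$. Finally $1+x\le e^x$ turns the bound into $\le e^{C\lambda^2}$.

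\textbf{Supermartingale and maximal inequality.} Fix $\lambda\in(0,p/2]$ small enough that $C\lambda^2 \le \lambda h/2$, i.e.\ $\lambda \le \min(p/2, h/(2C))$. Define
\[Z_n := \exp\big(-\lambda M_n - \tfrac{\lambda h}{2} n\big).\]
The mgf bound gives $\E[Z_{n+1}\mid\mathcal{F}_n] \le Z_n e^{C\lambda^2 - \lambda h/2} \le Z_n$, so $Z_n$ is a nonnegative supermartingale with $Z_0=1$. Ville's maximal inequality then gives $\mathbb{P}(\sup_n Z_n \ge e^{\lambda r}) \le e^{-\lambda r}$ for $r\ge0$. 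Since $\{R \ge r\} = \{\sup_n Z_n \ge e^{\lambda r}\}$ for $r>0$, we get $\mathbb{P}(R\ge r)\le e^{-\lambda r}$. Hence $R<\infty$ almost surely, and for $q := \lambda/2$ integrating the tail gives $\E e^{qR} \le 1 + \int_0^\infty q e^{qr}e^{-\lambda r}\,dr \le 2$. The constants $q$ and $C$ depend only on $p, K, h$, as required.

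The step needing the most care is the conditional mgf bound. It must be uniform in $\mathcal{F}_j$ and hold for possibly unbounded increments, and this is exactly where the exponential moment hypothesis (with $\lambda \le p/2$ leaving room to absorb the $y^2$ factor) is used. Integrability of $X_j$ makes the conditional expectations well-defined, and the rest is routine.
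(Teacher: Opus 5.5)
Your proof is correct. It matches the paper's up to the last step: the same decomposition $X_n \ge X_0 + hn + M_n$, the same $R$, and the same conditional estimate $\E[e^{-\lambda D_{j+1}}\mid\mathcal{F}_j]\le e^{C\lambda^2}$ obtained from second-order Taylor control.

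You differ only in how you handle the supremum over $n$:
\begin{itemize}
\item The paper iterates the conditional bound to get $\E e^{-q(hn/2+M_n)}\le e^{-qhn/4}$ for each fixed $n$, choosing $q$ so that $Cq^2\le hq/4$. It then uses the trivial bound $\E\sup_n\le\sum_n\E$ and sums a geometric series. This needs strict exponential decay in $n$, hence the extra slack $h/4$ instead of $h/2$, and it gives the constant $(1-e^{-qh/4})^{-1}$.
\item You only need the drift to compensate, $C\lambda^2\le\lambda h/2$, so that $Z_n$ is a nonnegative supermartingale. Ville's maximal inequality then gives the explicit tail bound $\mathbb{P}(R\ge r)\le e^{-\lambda r}$, and integrating with half the exponent gives $\E e^{\lambda R/2}\le 2$.
\end{itemize}

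Your route produces an explicit tail estimate and a universal constant $2$, at the cost of invoking a maximal inequality. The paper's route is slightly more elementary, since it uses nothing beyond $\sup\le\sum$ for nonnegative terms. The final exponents are comparable: both end up with $q \lesssim h/C$.
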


\begin{proof}
    Let
    \[D_j := X_{j} -X_{j-1} - \E[X_{j} -X_{j-1} \mid \mathcal{F}_{j-1}] = X_j - \E[X_{j} \mid \mathcal{F}_{j-1}],\]
    so that
    \[X_n = X_0 +\sum_{j=1}^n\E[X_{j} -X_{j-1} \mid \mathcal{F}_{j-1}]+  \sum_{j=1}^n D_j \geq X_0 + h n + \sum_{j=1}^n D_j.\]
    We can thus let
    \[R := -\inf_{n\geq0} \big(hn/2+ \sum_{j=1}^n D_j\big),\]
    so
    \[\E e^{qR} \leq \E e^{-q\inf_n (hn/2 + \sum_{j=1}^n D_j)}.\]
    Using that
    \[1+ x \leq e^x \leq 1+ x + x^2 e^{|x|}\]
    and $\E[D_{j+1} \mid \mathcal{F}_j] =0$, we have that for $q \in (0,p/2]$,
    \[\E [e^{- q D_{j+1}} \mid \mathcal{F}_j] \leq 1 -q \E [D_{j+1} \mid \mathcal{F}_j] + q^2 \E [D^2_{j+1} e^{q |D_{j+1}|} \mid \mathcal{F}_j] \leq 1 + C(p,K) q^2 \leq e^{ C(p,K) q^2}.\]
    Thus, by repeatedly conditioning,
    \[\E e^{-q \sum_{j=1}^n D_j} \leq e^{C q^2 n}\]
    Thus choosing $q(p,K,h)$ such that $C q^2 \leq \frac{h}{4} q$, we have that
    \[\E e^{- q ( hn/2+\sum_{j=1}^n D_j)} \leq e^{-qhn/4}.\]
    Thus
    \[ \E e^{-q\inf_n (hn/2 + \sum_{j=1}^n D_j)} = \E \sup_n e^{- q ( hn/2+\sum_{j=1}^n D_j)} \leq \sum_n \E e^{- q ( hn/2+\sum_{j=1}^n D_j)}  \leq C,\]
    allowing us to conclude.
\end{proof}

\begin{proof}[Proof of Theorem~\ref{thm:main}]
    We fix $\kappa \in \big[0,\frac{1}{4\pi^2} \log\frac{\pi}{e}\big]$,  so that $|A^{\kappa,-1}| \geq e$.

    For $n \in \N$, let
    \[Y_n := \begin{cases}  \tfrac{1}{\sqrt{2}}(\mathrm{e}_x - \mathrm{e}_y) \cdot \hat B_n^\kappa(1,0,0)  & n \mathrm{\,mod\,} 3 =0\\
  \tfrac{1}{\sqrt{2}}(\mathrm{e}_y - \mathrm{e}_z) \cdot \hat B_n^\kappa(0,1,0)& n \mathrm{\,mod\,} 3 =1\\
    \tfrac{1}{\sqrt{2}}(\mathrm{e}_z - \mathrm{e}_x) \cdot \hat B_n^\kappa(0,0,1)& n \mathrm{\,mod\,} 3 =2.
    \end{cases}\]
    By Proposition~\ref{prop:one pulse} (with coordinates appropriately cyclically permuted for $n \mathrm{\,mod\,} 3 =1,2$), we have that
    \[Y_{n+1} = A^{\kappa,-1} Y_n e^{-2\pi i \varphi_n} + \tilde{Y}_{n,1} + \tilde{Y}_{n,2} e^{2\pi i \varphi_n},\]
    where $\tilde{Y}_{n,1}, \tilde{Y}_{n,2}$ are $\C$-valued random variables that are measurable with respect to $(\theta_j)_{0 \leq j < n}$ and $\varphi_n$ are uniform $\T^1$-valued random variables given by
    \begin{equation*}
        \varphi_n := \begin{cases}
            \theta_{n,x} - \theta_{n,y} & n \mathrm{\,mod\,} 3 = 0,\\
            \theta_{n,y} - \theta_{n,z} & n \mathrm{\,mod\,} 3 = 1,\\
            \theta_{n,z} - \theta_{n,x} & n \mathrm{\,mod\,} 3 = 2,
        \end{cases}
    \end{equation*}
    hence $\varphi_n$ is $\theta_n$ measurable.

    Letting $X_n := \log |Y_n|$, \eqref{eq:mean growth} then gives that
    \[\E[X_{n+1} \mid (\theta_j)_{0 \leq j < n}]  \geq \log |A^{\kappa,-1}| + X_n \geq 1 + X_n \]
    and \eqref{eq:fluctuation bound} gives that
    \[\E\bigg[\exp\Big(\tfrac{1}{4} \big|X_{n+1} - \E[X_{n+1} \mid (\theta_j)_{0 \leq j < n}]\big|\Big) \,\bigg|\, (\theta_j)_{0 \leq j < n} \bigg] \leq C.\]

    Lemma~\ref{lem:martingale term subcritical} then gives that there exists a random variable $R \geq 0$ and constants  $p,C >0$ (independent of $\kappa \in \big[0,\frac{1}{4\pi^2} \log\frac{\pi}{e}\big]$) such that $\E e^{pR} \leq C$ and, using that $X_0 =0$, for all $n \in \N$,
    \begin{equation}
        \label{eq:X bound}
        X_n \geq \tfrac{1}{2}n - R.
    \end{equation}
    Note that from the definition of $Y_n$, we have that $\|B^\kappa_n\|_{L^2_x} \geq |Y_n|.$ Thus, by the definition of the $X_n$, the Plancherel identity, and~\eqref{eq:X bound}, for all $n \in \N$
    \[\|B^\kappa_n\|_{L^2_x} \geq e^{-R} e^{n/2}.\]
    Using the almost sure bounds on $U$, a simple Gr\"onwall argument gives that for some universal $C>0$, for all $n \geq 1$ and $t \in [n-1,n]$, 
    \[\|B^\kappa_n\|_{L^2_x} \leq C \|B_t^\kappa\|_{L^2_x},\]
    thus we have that for all $t \geq 0$,
    \[\|B^\kappa_t\|_{L^2_x}  \geq C^{-1} e^{-R} e^{t/2}.\]
    Defining $L^\kappa := C e^{R}$ and using the moment bound on $R$, we conclude.
\end{proof}

\section{Discussion of AI use}

\label{s:AI}

The original proof idea for this manuscript was generated essentially autonomously by ChatGPT 5.6 Sol Ultra, using the prompt presented at the end of the section, which was directly adapted from the prompt used in the preprint~\cite{ouimet_proof_2026}. After a short further conversation asking about the proof, I requested a manuscript write-up with the following prompt:
\begin{Verbatim}[breaklines=true, breakanywhere=true]
    Can you write up this result in the form of a research paper in a tex document. Fully prove every step and provide appropriate references.
\end{Verbatim}

The original generated manuscript is available as \verb|ChatGPT_manuscript_original.tex| in the arXiv source files. The original proof contained some minor errors and the headline result was different: everything was phrased in a random dynamical systems setting and bounds were given on the top Lyapunov exponent in the multiplicative ergodic theorem sense. Additionally, the proof was needlessly confusing as it went through a backward-in-time induction argument. The backward-in-time argument was needed because the proof was based essentially on an adjoint version of Proposition~\ref{prop:one pulse}---showing that one Fourier mode is mapped to only three Fourier modes, a column-wise tridiagonal statement---which is less natural to use. Following substantial additional prompting and revision rounds, where I suggested the simplified, forward-in-time argument and the possibility of showing an all-time lower bound statement, eventually a somewhat improved manuscript was generated---available as \verb|ChatGPT_manuscript_final.tex| in the arXiv source files. As part of these revisions, ChatGPT wrote a version of the exponential martingale argument that became Lemma~\ref{lem:martingale term subcritical} in order to prove the all-time lower bound, following a prompt asking if such an inequality is possible.

Ultimately, I determined that the final generated manuscript was rather inadequate in its exposition---making a simple idea difficult to understand---and that instead of attempting to verify the result, I would rather rewrite the argument from scratch. Following a discussion of the result with William Cooperman, which required me to rederive the outline from my understanding of the proof, I became convinced a quite simple argument was available. This manuscript is the write-up of that simple argument. There are a few further simplifications made over the AI-generated manuscript, such as a simpler set of velocity fields that are directly related by coordinate permutation---reducing the computational burden---and a more direct and explicit iteration argument. ChatGPT 5.6 Sol was used in writing of this manuscript: both in explaining some sections of the original AI-generated argument for me to digest as well as copy-editing and mathematical checking. The manuscript has been carefully hand-checked as well.

The main innovation of this manuscript is essentially a case of example-finding, which AI has recently been quite successful at~\cite{alon2026unitdistance,openai2026tenadvances,Alpoge2026}. However, it is apparent---from the experience on this project, other personal interactions with frontier models, and other published, AI-generated results---that current frontier models struggle substantially at writing clear, concise, and human-readable proofs. Without substantial human-provided prodding, the models appear to be rather ``lazy'': once a potential proof strategy emerges, no attempt is made to simplify the proof and isolate the necessary ingredients.

\subsection{Initial prompt}

\begin{Verbatim}[fontsize=\small,breaklines=true, breakanywhere=true]
Current task statement PROBLEM Prove the random fast dynamo conjecture: that is for a suitable class of smooth random velocity fields (OU process on finitely many Fourier modes, or alternating shears with randomized phases), the fields are fast dynamos, that is the (almost surely defined) top Lyapunov exponent for the associated resistive induction equation is uniformly lower bounded above 0 for all diffusivities \kappa \in (0,\ep). -- Assume for purposes of this task that a complete affirmative proof exists. A complete solution must prove exactly the above. Partial progress does not count unless it implies exactly the resolution above. Special cases, reductions to another unproved conjecture, computational verification through any fixed graph size, and candidate counterexamples without a complete nonexistence certificate are insufficient. Use multiagent v2 aggressively and dynamically. Do not use a fixed assignment such as ``N agents for strategy X.'' Instead, manage the search using the following heuristics: - Begin with a genuinely diverse portfolio of approaches. Agents should explore substantially different formulations, invariants, reductions, algebraic viewpoints, structural inductions, decompositions, flow formulations, transition systems, embeddings, extremal arguments, and computational sanity checks. - Do not tell most agents the currently favored approach. Preserve independence during early rounds so that agents do not all converge to the same attractive but incomplete reduction. - Maintain an explicit registry of approach families. Group agents by the mathematical idea they are using, not by superficial wording. If many agents converge to one family, redirect some of them toward underexplored formulations. - Do not allow one approach to dominate merely because it gives elegant reductions. A route that ends at a lemma equivalent in strength to the original conjecture is not close to completion unless it supplies a genuinely new proof of that lemma. - When an approach stalls at a theorem-strength missing lemma, mark that route as blocked. Only continue assigning agents to it if someone proposes a materially new mechanism, invariant, or construction. - Keep several incompatible proof routes alive through multiple rounds. Cross-pollinate ideas only after independent agents have developed them far enough to expose their real strengths and gaps. - Use adversarial agents throughout: every candidate proof must be checked - Require agents to return concrete lemmas, constructions, equations, or counterexamples to proposed sublemmas. Reject status reports, vague optimism, and claims that an unproved global compatibility statement is ``routine.'' - The root agent should repeatedly synthesize, challenge, redirect, and launch new rounds. Do not stop after the first wave fails. Produce a complete proof if one survives audit; otherwise report only the strongest rigorously proved derivation and its exact remaining gap. Do not return merely because current approaches fail or agents report theorem-strength gaps. Continue launching new rounds, reopening blocked approaches only when there is a genuinely new mechanism, and searching for fresh formulations. Return only when a complete affirmative proof has been found and survives adversarial audit. Do not return a reduction, partial result, isolated missing lemma, ``best effort'' summary, or explanation of why the problem is difficult. Public search may be used only for ordinary mathematical background or standard named theorems, not to search for a solution to this exact conjecture or benchmark. Do not search the public web merely to determine whether it is open, and do not answer that it is open.
\end{Verbatim}

\subsection*{Acknowledgments}

I would like to thank William Cooperman for working through the argument with me on the chalkboard during a visit to ETH Z\"urich.

{\small
\bibliographystyle{alpha}
\bibliography{cleanreferences}
}

\end{document}